\newtheorem{theorem}{Theorem}[section]
\newtheorem{proposition}[theorem]{Proposition}
\newtheorem{corollary}[theorem]{Corollary}
\theoremstyle{definition}
\theoremstyle{remark}
\newtheorem{remark}[theorem]{Remark}
\numberwithin{equation}{section}
\DeclareMathOperator{\Aut}{Aut}
\DeclareMathOperator{\Hol}{Hol}
\DeclareMathOperator{\Norm}{N}
\newcommand\numberbraces{1\,515\,429}
\begin{document}

\title
{Enumeration of left braces with additive group $C_4\times C_4\times C_4$}


\author{A. Ballester-Bolinches}
\address{Departament de Matem\`atiques, Universitat de Val\`encia, Dr.\ Moliner, 50, 46100 Burjassot, Val\`encia, Spain}
\email{Adolfo.Ballester@uv.es}

\author{R. Esteban-Romero}
\address{Departament de Matem\`atiques, Universitat de Val\`encia, Dr.\ Moliner, 50, 46100 Burjassot, Val\`encia, Spain}
\email{Ramon.Esteban@uv.es}

\author{V. P\'erez-Calabuig}
\address{Departament de Matem\`atiques, Universitat de Val\`encia, Dr.\ Moliner, 50, 46100 Burjassot, Val\`encia, Spain}
\curraddr{}
\email{Vicent.Perez-Calabuig@uv.es}
\thanks{}

\subjclass[2020]{Primary 16T25; Secondary 81R50, 20-08}
\keywords{Skew left brace, left brace, regular subgroup, holomorph}
\date{}

\dedicatory{}

\begin{abstract}
    We show that the number of isomorphism classes of left braces of order~$64$ with additive group isomorphic to $C_4\times C_4\times C_4$ is $\numberbraces$.

\end{abstract}

\maketitle

\section{Introduction}

The notion of \emph{skew left brace} was introduced by Guarnieri and Vendramin in \cite{GuarnieriVendramin17}. A \emph{skew left brace} is a triple $(B, {+}, {\cdot})$ where $B$ is a set and ${+}$, ${\cdot}$ are two binary operations on~$B$ such that $(B, {+})$ and $(B, {\cdot})$ are groups and that are related by the distributive-like law $a(b+c)=ab-a+ac$. As it is common in the theory of skew left braces, we omit the sign $\cdot$ and we write $ab$ instead of $a\cdot b$ and we use $-a$ to denote the inverse of $a$ in the group $(B, {+})$; the expression $a-b$ means $a+(-b)$. When, in addition,
$(B, {+})$ is an abelian group, then we speak of a \emph{left brace}, a notion introduced by Rump in his seminal paper \cite{Rump07}.

One of the most natural problems in the theory of skew left braces is the determination of skew left braces of a given finite order. Guarnieri and Vendramin present in \cite[Algorithm~5.1]{GuarnieriVendramin17} an algorithm to enumerate all skew left braces with a given finite additive group~$A$. They presented in \cite{GuarnieriVendramin17} the numbers of isomorphism classes of left braces of order~$n$ for $n\le 120$ except for $n\in\{32,64, 81, 96\}$ obtained with their implementation of this algorithm in \textsf{Magma} \cite{BosmaCannonFiekerSteel16-Magma}. In fact, they claim in their paper: ``With current computational resources, we were not able to compute the number of non-isomorphic left braces of orders~$32$, $64$, $81$ and~$96$.'' This computation appears open as \cite[Problem~6.1]{GuarnieriVendramin17}. Vendramin posed in \cite[Problem~2.13]{Vendramin19-agta} the problem of constructing all left braces of order~$32$, for which he presented some partial results on \cite[Table~2.3]{Vendramin19-agta}. He also wrote as a comment to this problem: ``The number of (skew) left braces of size~$64$, $96$ or $128$ seems to be extremely large and our computational methods are not strong enough to construct them all.''

Bardakov, Neshchadim, and Yadav presented in \cite[Algorithm~2.4]{BardakovNeshchadimYadav20} a modification of \cite[Algorithm~5.1]{GuarnieriVendramin17} for the computation of finite skew left braces with a given additive group. They were able to enumerate the isomorphisms classes of skew left braces of orders~$32$ and~$81$, as well as the left braces of order~$96$. With respect to the isomorphism classes of left braces of order~$64$, they were able to enumerate them in \cite[Table~6]{BardakovNeshchadimYadav20} for all isomorphism classes of abelian groups of order~$64$, except for the cases of additive group isomorphic to $C_4\times C_4\times C_4$ (\texttt{SmallGroup(64, 55)} in the notation of \textsf{GAP} \cite{GAP4-11-1}) and to $C_2\times C_2\times C_4\times C_4$ (\texttt{SmallGroup(64, 192)}).

In this paper we obtain the isomorphism classes of left braces with additive group isomorphic to $C_4\times C_4\times C_4$.
\begin{theorem}
  There are $\numberbraces$ isomorphism classes of left braces of order~$64$ whose additive group is isomorphic to $C_4\times C_4\times C_4$.
\end{theorem}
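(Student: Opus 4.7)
The plan is to apply the Guarnieri--Vendramin correspondence: isomorphism classes of left braces with additive group $A=C_{4}\times C_{4}\times C_{4}$ are in bijection with the $\Aut(A)$-conjugacy classes of regular subgroups of $\Hol(A)=A\rtimes\Aut(A)$. Here $\Aut(A)$ has order $86\,016=2^{12}\cdot 3\cdot 7$, so $|\Hol(A)|=5\,505\,024$; running \cite[Algorithm~5.1]{GuarnieriVendramin17} directly on a group of this size is precisely the task declared infeasible in \cite{GuarnieriVendramin17} and \cite{BardakovNeshchadimYadav20}. The heart of the proof is therefore a combination of structural reductions, an optimised implementation, and its execution.

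The main structural reduction is that every regular subgroup $R\le\Hol(A)$ has order $64=2^{6}$, hence is a $2$-group contained in a Sylow $2$-subgroup of $\Hol(A)$. Fixing a Sylow $2$-subgroup $Q\le\Aut(A)$ of order $2^{12}$ and setting $P=A\rtimes Q$, a standard Sylow argument shows that every $\Aut(A)$-orbit of regular subgroups has a representative inside $P$, so the search transfers to $P$, a group of order $2^{18}=262\,144$. The $\Aut(A)$-orbits on regular subgroups are then recovered from the $\Norm_{\Aut(A)}(Q)$-orbits on regular subgroups of $P$, together with a fusion step identifying those orbits that coincide under the full $\Aut(A)$-action, in the spirit of the refinement in \cite[Algorithm~2.4]{BardakovNeshchadimYadav20}.

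To enumerate regular subgroups of $P$ efficiently, the plan is to describe each brace through its associated homomorphism $\lambda\colon(A,\cdot)\to\Aut(A,+)$, $\lambda_{a}(b)=-a+a\cdot b$. The enumeration then stratifies naturally by the isomorphism type of the multiplicative group $(A,\cdot)$, which ranges over the $267$ groups of order $64$: for each such group $G$ one enumerates conjugacy classes of homomorphisms $\lambda\colon G\to Q$ and, for each, counts the cocycles that complete it to a brace up to the appropriate equivalence. Stratifying in this way both parallelises the work and provides a second, independent recount used as a consistency check.

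The principal obstacle is computational scale rather than conceptual novelty. The candidate regular subgroups of $P$ number in the millions, so grouping them into $\Aut(A)$-orbits by pairwise conjugacy tests is prohibitive; one must design canonical forms for subgroups of $P$ so that deduplication reduces to hashing, and keep memory and running time within practical bounds, plausibly by parallelising the computation along the multiplicative isomorphism type. Once these engineering ingredients are in place, the theorem follows by executing the algorithm, independently verifying the output, and reporting the total $\numberbraces$.
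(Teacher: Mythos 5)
Your structural skeleton coincides with the paper's: isomorphism classes of left braces with additive group $A$ correspond to conjugacy classes of regular subgroups of $\Hol(A)$ under $\Aut(A)$ (equivalently, by Proposition~\ref{prop-conj-hol-aut}, under all of $\Hol(A)$), and since every regular subgroup is a $2$-group of order $64$, a Sylow argument confines the search to a fixed Sylow $2$-subgroup of $\Hol(A)$ of order $2^{18}$, followed by a fusion step in the full holomorph. Up to that point you and the paper agree, and your arithmetic ($\lvert\Aut(A)\rvert=2^{12}\cdot 3\cdot 7$, $267$ groups of order $64$) is correct. Where you diverge is in the two computational engines, and both of your choices leave real gaps. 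The paper does not enumerate braces via the homomorphism $\lambda$ and cocycles stratified over the multiplicative types; it runs Hulpke's layered subgroup algorithm (\texttt{SubgroupsSolvableGroup}) on the Sylow $2$-subgroup along a normal series with elementary abelian factors, pruned by the exact-order and surjective-projection restrictions, and modified to page intermediate layers to disk. Your cocycle route is a legitimate alternative in principle, but ``counts the cocycles that complete it to a brace'' hides the hard part: one needs \emph{bijective} $1$-cocycles, bijectivity is not a cohomological (linear) condition, and the equivalence relation matching brace isomorphism intertwines $\Aut(G)$ and $\Aut(A)$; you give no reason to believe this enumeration terminates at this scale.

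More seriously, your deduplication step postulates ``canonical forms for subgroups of $P$ so that deduplication reduces to hashing.'' No such canonical form is available for conjugacy of subgroups under a group of order $5\,505\,024$ acting on a pool of roughly $3\times 10^{7}$ candidates; producing one would essentially solve the conjugacy problem that is the actual bottleneck. What the paper does instead is partition the $31\,367\,678$ Sylow-level classes by computable invariants (isomorphism type of the regular subgroup, of the kernel of its action on $A$, and of the corresponding quotient, later refined by conjugacy class length) into $2\,353$ cells, and then perform pairwise conjugacy tests within each cell in parallel --- a step that still required about two months on a supercomputer after being judged infeasible on a desktop. Since the statement is a specific count, a proof must actually carry such a computation to completion and report verified output; as written, your plan asserts feasibility precisely at the step where the paper's own account locates the difficulty, so the argument is incomplete.
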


Table~\ref{tab-mult} summarises these isomorphism classes by the isomorphism class of the multiplicative group. For instance, the first two entries of the first row of the table ($4$, $42$) mean that there are two isomorphism classes of braces with additive group $C_4\times C_4\times C_4$ and multiplicative group isomorphic to \texttt{SmallGroup(64, 4)}.
\begin{table}[htbp]
  \caption{Number of isomorphism classes of left braces with additive group $C_4\times C_4\times C_4$ by multiplicative group}\label{tab-mult}\medskip
  \footnotesize
  \centering
  \begin{tabular}{|rr|rr|rr|rr|rr|}\hline
    id&\#&id&\#&id&\#&id&\#&id&\#\\\hline
4  & 42    & 75  & 43\,525 & 119 & 67  & 165 & 81    & 221 & 26\,932 \\
5  & 40    & 76  & 14\,422 & 120 & 66  & 166 & 130   & 222 & 5\,404  \\
6  & 4     & 77  & 28\,837 & 121 & 67  & 167 & 1     & 223 & 32\,308 \\
7  & 22    & 78  & 43\,304 & 122 & 66  & 168 & 47    & 224 & 2\,763  \\
8  & 74    & 79  & 43\,180 & 123 & 1   & 169 & 46    & 225 & 2\,731  \\
9  & 64    & 80  & 14\,420 & 124 & 40  & 170 & 47    & 226 & 21\,671 \\
10 & 16    & 81  & 43\,123 & 125 & 40  & 172 & 46    & 227 & 64\,744 \\
13 & 2     & 82  & 4\,118  & 128 & 63  & 173 & 34    & 228 & 21\,644 \\
14 & 16    & 83  & 32    & 129 & 135 & 175 & 34    & 229 & 14\,412 \\
17 & 50    & 84  & 33    & 130 & 163 & 176 & 48    & 230 & 3\,618  \\
18 & 82    & 85  & 42    & 131 & 92  & 177 & 34    & 231 & 10\,802 \\
19 & 12      & 86  & 32    & 132 & 164 & 178 & 82      & 232 & 43\,179 \\
20 & 156     & 87  & 102   & 133 & 224 & 179 & 1       & 233 & 21\,576 \\
21 & 36      & 88  & 93    & 134 & 287 & 181 & 1       & 234 & 43\,092 \\
22 & 24      & 89  & 112   & 135 & 98  & 182 & 1       & 235 & 10\,786 \\
23 & 406     & 90  & 947   & 136 & 326 & 192 & 976     & 236 & 10\,794 \\
24 & 48      & 91  & 266   & 137 & 130 & 193 & 3\,453  & 237 & 10\,790 \\
25 & 106     & 92  & 149   & 138 & 485 & 194 & 2\,826  & 240 & 10\,828 \\
32 & 294     & 93  & 58    & 139 & 515 & 195 & 6\,945  & 241 & 21\,686 \\
33 & 283     & 94  & 164   & 140 & 1   & 196 & 16\,440 & 242 & 3\,664  \\
34 & 133     & 95  & 57    & 141 & 42  & 197 & 3\,624  & 243 & 21\,516 \\
35 & 160     & 96  & 101   & 142 & 73  & 198 & 5\,468  & 244 & 10\,820 \\
36 & 2       & 97  & 109   & 143 & 114 & 199 & 6\,987  & 246 & 13    \\
37 & 10      & 98  & 135   & 144 & 73  & 200 & 489     & 247 & 55    \\
55 & 567     & 99  & 65    & 145 & 138 & 201 & 5\,430  & 248 & 56    \\
56 & 3\,757  & 100 & 105   & 146 & 131 & 202 & 7\,632  & 249 & 66    \\
57 & 3\,640  & 101 & 316   & 147 & 35  & 203 & 19\,248 & 250 & 12    \\
58 & 21\,838 & 102 & 280   & 148 & 96  & 204 & 13\,610 & 251 & 43    \\
59 & 21\,628 & 103 & 39    & 149 & 131 & 205 & 16\,629 & 252 & 31    \\
60 & 3\,908  & 104 & 42    & 150 & 35  & 206 & 27\,099 & 253 & 120   \\
61 & 21\,812 & 105 & 32    & 151 & 96  & 207 & 8\,277  & 254 & 147   \\
62 & 11\,052 & 106 & 17    & 152 & 64  & 208 & 5\,407  & 255 & 203   \\
63 & 10\,850 & 107 & 17    & 153 & 16  & 209 & 9\,052  & 256 & 246   \\
64 & 7\,193  & 108 & 21    & 154 & 48  & 210 & 32\,312 & 257 & 52    \\
65 & 3\,682  & 109 & 67    & 155 & 2   & 211 & 1\,919  & 258 & 144   \\
66 & 43\,617 & 110 & 24    & 156 & 43  & 212 & 2\,763  & 259 & 92    \\
67 & 43\,927 & 111 & 24    & 157 & 2   & 213 & 13\,608 & 260 & 193   \\
68 & 86\,219 & 112 & 52    & 158 & 43  & 214 & 2\,758  & 261 & 1\,011  \\
69 & 87\,259 & 113 & 54    & 159 & 1   & 215 & 19\,242 & 262 & 173   \\
70 & 43\,183 & 114 & 46    & 160 & 66  & 216 & 19\,092 & 263 & 2\,052  \\
71 & 21\,837 & 115 & 68    & 161 & 52  & 217 & 8\,253  & 264 & 1\,921  \\
72 & 21\,725 & 116 & 140   & 162 & 52  & 218 & 13\,543 & 265 & 489   \\
73 & 14\,585 & 117 & 68    & 163 & 69  & 219 & 53\,836 & 266 & 503   \\
74 & 14\,420 & 118 & 1     & 164 & 81  & 220 & 32\,333 & 267 & 10    \\\hline 
\end{tabular}
\end{table}

The study of the left braces with additive group isomorphic to $C_2\times C_2\times C_4\times C_4$ has been the object of a later research in \cite{BallesterEstebanPerezC23-braces64-192}, after the submission of this paper. That article has been accepted for publication while writing the revised version for this paper. We must indicate that the techniques used in this paper to classify the left braces with additive group $C_4\times C_4\times C_4$ up to isomorphism are not enough to solve the corresponding problem for the additive group $C_2\times C_2\times C_4\times C_4$ due mainly to the high number of intermediate subgroups needed in the algorithm and we have had to use new ideas that are described with detail in~\cite{BallesterEstebanPerezC23-braces64-192}. The main results of that paper are the following ones.

\begin{theorem}[{\cite[Theorem~1.1]{BallesterEstebanPerezC23-braces64-192}}]
  The number of isomorphism classes of left braces of order~$64$ with additive group isomorphic to $C_2\times C_2\times C_4\times C_4$ is $10\,326\,821$.
\end{theorem}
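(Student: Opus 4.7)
The plan is to apply the Guarnieri--Vendramin correspondence: isomorphism classes of left braces with additive group $A = C_2\times C_2\times C_4\times C_4$ are in bijection with the orbits of $\Aut(A)$, acting by conjugation, on regular subgroups of $\Hol(A) = A\rtimes \Aut(A)$ of order $64$. Because $|\Aut(A)|$ is considerably larger than that of $C_4\times C_4\times C_4$ and because the Frattini quotient $A/\Phi(A)\cong C_2^4$ admits many more chains of characteristic subgroups than in the homocyclic case, the Bardakov--Neshchadim--Yadav style enumeration used in the proof of the main theorem of this paper produces prohibitively many intermediate subgroups. A finer stratification of the search is therefore required.

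I would cross two natural stratifications. First, enumerate the regular subgroups by the abstract isomorphism type of the multiplicative group $(B,\cdot)$: for each of the $267$ groups $G$ of order $64$, compute the orbits of $\Aut(A)\times\Aut(G)$ on the set of homomorphisms $\lambda\colon G\to\Aut(A)$ that extend to a regular embedding of $G$ into $\Hol(A)$. Second, stratify by the socle $\operatorname{Soc}(B)$, the characteristic ideal on which the brace operations coincide; the quotient $B/\operatorname{Soc}(B)$ is then a left brace whose additive group is a proper quotient of $A$, so its isomorphism class already appears in the tables of \cite{GuarnieriVendramin17} or \cite{BardakovNeshchadimYadav20}. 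For each candidate pair $(S,\bar B)$ with $S\le A$ and $\bar B$ a brace with additive group $A/S$, I would enumerate brace extensions of $\bar B$ by the trivial brace on $S$ via a cocycle-type computation, filtered by the $\Aut(A)$-action. Crossing the two stratifications yields strata that are individually small enough to be handled.

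The main obstacle will be the double bookkeeping for isomorphism. Although the socle is canonical, the choice of a complement for $S$ in $A$ and the associated cocycle are not; different cocycles can represent the same brace, and an automorphism of $\bar B$ that does not lift to an $\Aut(A)$-stabiliser of $S$ produces further identifications that must be detected inside $\Aut(A)$ rather than merely inside $\Aut(\bar B)$. I would manage this by selecting canonical representatives of the $\Aut(A)$-orbits on pairs $(S,\bar B)$, computing their stabilisers in $\Aut(A)$ explicitly, and letting these stabilisers act on the relevant cocycle space. As a cross-check, counts on each small stratum can be compared against the direct Guarnieri--Vendramin enumeration wherever that remains feasible, and global consistency would be verified by matching the sum of multiplicative-group fibres to the total and by comparing invariants such as socle order, centre, and derived length of $(B,\cdot)$. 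A parallel implementation in \textsf{GAP} or \textsf{Magma} should then return the stated count of $10\,326\,821$.
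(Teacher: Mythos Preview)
This theorem is not proved in the present paper at all: it is quoted verbatim from the companion paper \cite{BallesterEstebanPerezC23-braces64-192}, and the only information given here is that the Hulpke-style subgroup enumeration used for $C_4\times C_4\times C_4$ ``is not enough'' for $C_2\times C_2\times C_4\times C_4$ ``due mainly to the high number of intermediate subgroups'' and that ``new ideas'' were required. So there is no in-paper proof to compare your proposal against; what you have written is a plausible alternative computational strategy, not a reconstruction of the authors' argument.

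Two remarks on your plan itself. First, your socle stratification is reasonable in principle (for a left brace $\operatorname{Soc}(B)=\ker\lambda$ is a nonzero ideal and $B/\operatorname{Soc}(B)$ is again a left brace of strictly smaller order, hence covered by the existing tables), but the extension problem you set up is delicate: extensions of a brace $\bar B$ by a trivial brace $S$ are governed not by ordinary group $2$-cocycles but by pairs consisting of an action and a compatible cocycle satisfying the brace distributivity, and the isomorphism problem for such extensions under the full $\Aut(A)$-action is exactly the hard bookkeeping you flag. Whether this is genuinely cheaper than a refined direct enumeration is not clear a priori. Second, and more fundamentally, for a result of this type the ``proof'' is the executed and cross-checked computation; a sketch of an algorithm that ``should then return the stated count'' is a research plan, not a proof. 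Without running it you cannot certify the number $10\,326\,821$, and the paper gives no independent theoretical reason for that specific value.
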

\begin{corollary}[{\cite[Corollary~1.2]{BallesterEstebanPerezC23-braces64-192}}]
  The number of isomorphism classes of left braces of order~$64$ is $15\,095\,601$.
\end{corollary}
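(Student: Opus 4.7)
The plan is to invoke the standard correspondence between left braces with additive group $A$ and regular subgroups of the holomorph $\Hol(A) = A \rtimes \Aut(A)$: the isomorphism classes of left braces with additive group $A$ are in bijection with the orbits of $\Aut(A)$ acting by conjugation on the set of regular subgroups of $\Hol(A)$. Take $A = C_4 \times C_4 \times C_4$; then $\Aut(A) \cong \mathrm{GL}_3(\mathbb{Z}/4\mathbb{Z})$ has order $2^9 \cdot 3^4 \cdot 5 \cdot 7$, and so $\Hol(A)$ has order $2^{15} \cdot 3^4 \cdot 5 \cdot 7$. Since every regular subgroup has order $64 = 2^6$, it is a $2$-group and hence contained in some Sylow $2$-subgroup of $\Hol(A)$; using that $A \lhd \Hol(A)$ and $\Hol(A) = A \cdot \Aut(A)$, one checks that every two Sylow $2$-subgroups of $\Hol(A)$ are already $\Aut(A)$-conjugate, so fixing one such Sylow $P$, every $\Aut(A)$-orbit of regular subgroups meets $P$, and the whole enumeration can be carried out inside $P$ followed by an identification of $\Aut(A)$-equivalent outputs.

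The algorithmic framework I would adopt is the modification of Guarnieri--Vendramin due to Bardakov, Neshchadim, and Yadav. Concretely, I would (a) construct $P = A \rtimes S$ explicitly in \textsf{GAP}, where $S$ is a Sylow $2$-subgroup of $\Aut(A)$ of order $2^9$; (b) enumerate the subgroups $H \le P$ of order $64$ satisfying the regularity condition $H \cap S = 1$ (equivalently, $H$ is a complement of $S$ in $P$), building each $H$ level by level along a chain
\[
1 = H_0 \lhd H_1 \lhd \cdots \lhd H_6 = H, \qquad |H_{i+1}/H_i| = 2,
\]
while carrying, with every partial $H_i$, its setwise stabilizer in $\Aut(A)$ so that $\Aut(A)$-equivalent partial branches are discarded on the fly; and (c) reduce the resulting list modulo $\Aut(A)$-conjugation to obtain one representative per isomorphism class. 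To make the final reduction tractable, I would stratify the candidates by the isomorphism type of $H$ as an abstract group, which is an $\Aut(A)$-invariant and yields precisely the row index of Table~\ref{tab-mult}; within each stratum, the conjugacy test only has to be applied among subgroups already known to be abstractly isomorphic.

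The main obstacle, and the reason the case $C_4 \times C_4 \times C_4$ resisted previous attempts, is the combinatorial size of the search: $P$ has order $2^{15}$, the number of candidate chains is enormous, and certain multiplicative group types (for instance those contributing tens of thousands of orbits in Table~\ref{tab-mult}, such as \textsf{SmallGroup(64, 68)}, \textsf{SmallGroup(64, 69)}, or \textsf{SmallGroup(64, 219)}) generate lists so long that storing them naively is infeasible. The delicate part of the implementation is therefore the design of the chain and the propagation of the partial stabilizers $\mathrm{N}_{\Aut(A)}(H_i)$, so that pruning happens as early as possible rather than at the end, together with a careful distribution of the workload across the isomorphism types of $H$; once these refinements over Guarnieri--Vendramin and Bardakov--Neshchadim--Yadav are in place, summing the orbit counts across all multiplicative groups produces Table~\ref{tab-mult} and the claimed total $\numberbraces$.
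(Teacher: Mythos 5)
Your proposal proves the wrong statement: it is a sketch of the main theorem of this paper (that there are $\numberbraces$ isomorphism classes of left braces of order~$64$ with additive group $C_4\times C_4\times C_4$), not of the corollary, which asserts that the \emph{total} number of isomorphism classes of left braces of order~$64$ is $15\,095\,601$. A left brace of order~$64$ may have as additive group any of the eleven isomorphism types of abelian groups of order~$64$, and braces with non-isomorphic additive groups are never isomorphic, so the corollary is obtained by summing eleven separate counts: nine of them were already computed by Bardakov, Neshchadim, and Yadav; the tenth (additive group $C_4\times C_4\times C_4$, contributing $\numberbraces$) is the content of this paper; and the eleventh (additive group $C_2\times C_2\times C_4\times C_4$, contributing $10\,326\,821$) is the main theorem of the companion paper from which the corollary is quoted. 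Your argument, even if carried out in full, produces only the single summand $\numberbraces$ and says nothing about the remaining ten additive groups, so it cannot reach the figure $15\,095\,601$. Moreover, the paper explicitly warns that the techniques you describe do \emph{not} suffice for the $C_2\times C_2\times C_4\times C_4$ case, which is by far the largest summand, so the missing part is not a routine repetition of your computation.

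As a secondary point, the numerical data in the one case you do treat are off: $\lvert\Aut(C_4\times C_4\times C_4)\rvert=\lvert\mathrm{GL}_3(\mathbb{Z}/4\mathbb{Z})\rvert=2^{9}\cdot\lvert\mathrm{GL}_3(\mathbb{F}_2)\rvert=2^{12}\cdot 3\cdot 7$, not $2^{9}\cdot 3^{4}\cdot 5\cdot 7$; consequently a Sylow $2$-subgroup $S$ of $\Aut(A)$ has order $2^{12}$ and $P=A\rtimes S$ has order $2^{18}$, not $2^{15}$. The overall strategy for that summand---reduce to a Sylow $2$-subgroup of the holomorph, enumerate regular subgroups layer by layer along a normal series with early pruning, then identify $\Aut(A)$-conjugates after stratifying by isomorphism invariants---does match the paper's, but it establishes only the theorem, not the corollary.
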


\section{The holomorph of a group}
Most of the results in this section can be considered as folklore. We present them here for the sake of completeness.

Let $(G, {+})$ be a group. The \emph{holomorph} of $G$ is
\[\Hol(G, {+})=\{(g, \alpha)\mid g\in G, \alpha\in\Aut(G)\}\]
with the operation given by
\[(g,\alpha)(h,\beta)=(g+\alpha(h), \alpha\circ\beta).\]
The identity element of $\Hol(G, {+})$ is $(0,1)$ and the inverse of the element $(g,\alpha)\in\Hol(G, {+})$ is \[(g, \alpha)^{-1}=(-\alpha^{-1}(g), \alpha^{-1}).\]
The subgroup $A=\{(0,\alpha)\mid \alpha\in\Aut(G, {+})\}$ is isomorphic to $\Aut(G, {+})$. We identify $A$ with $\Aut(G, {+})$.

The group $\Hol(G, {+})$ acts on $(G,{+})$ by means of
\[(g,\alpha)*h=g+\alpha(h),\qquad g, h\in G.\]
Note that when $(G, {+})$ is the additive group of a vector space, then $\Hol(G, {+})$ can be identified with the affine group on the vector space~$G$ and this corresponds to the natural action of the affine group on~$G$.

Furthermore, let us show that this action is faithful: If $(g, \alpha)*h=(k, \beta)*h$ for all $h\in G$, then $g+\alpha(h)=k+\beta(h)$ for all $h\in G$. In particular, taking $h=0$, we obtain that $g=k$. Consequently, $\alpha(h)=\beta(h)$ for all $h\in G$ and so $\alpha=\beta$. Hence, this action is faithful. Therefore, we can identify $\Hol(G, {+})$ with a subgroup of the group $\Sigma_G$ of all permutations of the set~$G$.

Given $g\in G$, let $\tau_g\colon G\longrightarrow G$ be given by $\tau_g(h)=g+h$, the \emph{left translation} of $G$ defined by $g$. It is clear that $T=\{\tau_g\mid g\in G\}$ is a subgroup of $\Sigma_G$ isomorphic to $G$. Furthermore, we can identify $\tau_g$ with $(g, 1)\in \Hol(G,{+})$. The proof of the following proposition can be found, for instance, in \cite[Application~1.5.2]{Kerber99}.
\begin{proposition}
  The normaliser in $\Sigma_G$ of $T$ coincides with the holomorph of~$(G,{+})$.
\end{proposition}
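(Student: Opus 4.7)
The plan is to prove the two inclusions $\Hol(G,{+})\subseteq \Norm_{\Sigma_G}(T)$ and $\Norm_{\Sigma_G}(T)\subseteq \Hol(G,{+})$, working throughout under the identification of $\Hol(G,{+})$ with its faithful image in $\Sigma_G$ given by $(g,\alpha)\mapsto (h\mapsto g+\alpha(h))=\tau_g\circ\alpha$, and with $\tau_g$ identified with $(g,1)$.

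For the first inclusion, I would take $(g,\alpha)\in\Hol(G,{+})$ and any $\tau_h=(h,1)\in T$, and compute the conjugate $(g,\alpha)(h,1)(g,\alpha)^{-1}$ directly from the multiplication and inversion formulas stated in the section. Multiplying in $\Hol(G,{+})$ gives $(g,\alpha)(h,1)=(g+\alpha(h),\alpha)$, and multiplying on the right by $(g,\alpha)^{-1}=(-\alpha^{-1}(g),\alpha^{-1})$ collapses to $(\alpha(h),1)=\tau_{\alpha(h)}\in T$. This shows that every element of $\Hol(G,{+})$ normalises $T$; the key take-away is the conjugation rule $(g,\alpha)\tau_h(g,\alpha)^{-1}=\tau_{\alpha(h)}$, which will be used in the converse as a guide for what an arbitrary normalising permutation must look like.

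For the converse, I would take $\sigma\in\Norm_{\Sigma_G}(T)$ and first reduce to the case $\sigma(0)=0$ by replacing $\sigma$ with $\tau_{-\sigma(0)}\circ\sigma$; this lies in $\Norm_{\Sigma_G}(T)$ because $T$ is contained in $\Norm_{\Sigma_G}(T)$. Under this reduction, from $\sigma T\sigma^{-1}=T$ one gets, for each $h\in G$, some element $h'\in G$ with $\sigma\tau_h\sigma^{-1}=\tau_{h'}$; evaluating both sides at $0$ and using $\sigma(0)=0=\sigma^{-1}(0)$ yields $h'=\sigma(h)$, so $\sigma\tau_h\sigma^{-1}=\tau_{\sigma(h)}$. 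Evaluating this identity at an arbitrary $\sigma^{-1}(k)$ gives $\sigma(h+\sigma^{-1}(k))=\sigma(h)+k$, and substituting $k=\sigma(k')$ produces $\sigma(h+k')=\sigma(h)+\sigma(k')$. Hence $\sigma$ is a bijective homomorphism, i.e.\ $\sigma\in\Aut(G,{+})$, so $\sigma$ corresponds to $(0,\sigma)\in\Hol(G,{+})$. Undoing the reduction, the original normalising permutation equals $\tau_{\sigma(0)}\circ(0,\sigma)=(\sigma(0),\sigma)\in\Hol(G,{+})$.

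There is essentially no serious obstacle here; the argument is purely formal manipulation with the multiplication in $\Hol(G,{+})$ and the reduction to a permutation fixing the identity. The only point that requires care is keeping the two different uses of $\Hol(G,{+})$ (as the abstract semidirect product defined at the start of the section, and as a subgroup of $\Sigma_G$ through the faithful action) consistently identified, so that the computation $(g,\alpha)\tau_h(g,\alpha)^{-1}=\tau_{\alpha(h)}$ is read correctly as a conjugation inside $\Sigma_G$.
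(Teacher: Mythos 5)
Your argument is correct in substance, but note that the paper does not actually prove this proposition: it simply refers the reader to Kerber's book (Application~1.5.2 of \cite{Kerber99}), so you have supplied the standard direct argument that the paper omits. Both inclusions are handled the right way: conjugating $\tau_h$ by $(g,\alpha)$ lands back in $T$, and the reduction of an arbitrary normalising permutation to one fixing $0$, followed by the evaluation trick $\sigma\tau_h\sigma^{-1}=\tau_{\sigma(h)}$ yielding $\sigma(h+k')=\sigma(h)+\sigma(k')$, is exactly how this is done. One small inaccuracy: since the section treats a general (possibly nonabelian) group $(G,{+})$, the product $(g+\alpha(h),\alpha)(-\alpha^{-1}(g),\alpha^{-1})$ equals $(g+\alpha(h)-g,\,1)$, not $(\alpha(h),1)$; the clean rule $(g,\alpha)\tau_h(g,\alpha)^{-1}=\tau_{\alpha(h)}$ holds only when $(G,{+})$ is abelian (as it is in the paper's application to $C_4\times C_4\times C_4$). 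This does not damage the proof, because $\tau_{g+\alpha(h)-g}$ is still an element of $T$, so the inclusion $\Hol(G,{+})\subseteq\Norm_{\Sigma_G}(T)$ survives, and your converse direction never actually relies on the exact form of the conjugation rule, only on the fact that the conjugate is some translation. You should just restate that first computation so it is valid without the commutativity assumption.
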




We now present some characterisations of regular subgroups that are useful for our purposes.

First of all, note that a subgroup $H$ of $\Hol(G, {+})\le \Sigma_G$ is regular if, and only if, it acts transitively on $G$ and the stabiliser $H_g$ of each element $g\in G$ is trivial. In other words, given $h$, $k\in G$, there exists a unique $(g, \alpha)\in H$ such that $(g, \alpha)*k=h$. Suppose that $H$ is a regular subgroup of $\Hol(G, {+})$. Given $h\in G$, there exists a unique $(g, \alpha)\in H$ such that $(g, \alpha)*0=h$. Consequently, $g+\alpha(0)=g=h$. Suppose that $(g, \alpha)$, $(g, \beta)\in H$. Since $g=(g,\alpha)*0=(g, \beta)*0=g$, the regularity of the action of $H$ on $G$ shows that $\alpha=\beta$. Hence, $\alpha$ is uniquely determined by $g\in G$, let us call $\alpha=\lambda_g$. Consequently $H=\{(g, \lambda_g)\mid g\in G\}$, where the map $\lambda\colon G\longrightarrow G$, $\lambda(g)=\lambda_g$ depends on $H$. Furthermore, since the action is transitive, the ``projection'' of $H$ on the $G$-component of $\Hol(G,{+})$ is surjective.

Note that $(0,1)*0=0$, therefore $\lambda_0=1$ and so $H\cap \Aut(G,{+})=\{(0,1)\}$.

\begin{remark}
Other authors have considered the regularity of a subgroup $H$ of $\Hol(G)$ in the following equivalent way: given $k\in G$, there exists a unique $(h, \beta)\in H$ such that $(h, \beta)*k=0$, that is, for every $k\in G$ there exists a unique $(h, \beta)\in H$ such that $h+\beta(k)=0$. We have preferred the opposite point of view because we obtain more easily the expression $H=\{(g, \lambda_g)\mid g\in G\}$ for a regular subgroup $H$ of $\Hol(G)$ (cf.~\cite[Lemma~4.1]{GuarnieriVendramin17}).
\end{remark}

In the following proposition, we assume that $(G, {+})$ is a finite group.

\begin{proposition}\label{prop-count}
  Let $(G, {+})$ be a finite group and let $H$ be a subgroup of $\Hol(G, {+})$. Let us denote by $\pi_G$ the ``projection'' of $\Hol(G, {+})$ on $G$. Then $\lvert H\rvert=\lvert \pi_G(H)\rvert \lvert H\cap \Aut(G, {+})\rvert$.
\end{proposition}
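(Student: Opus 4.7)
The plan is to analyse the (set-theoretic) fibres of $\pi_G$ restricted to $H$ and show they all have the same size, namely $\lvert H\cap \Aut(G,{+})\rvert$. Note that $\pi_G$ is \emph{not} a group homomorphism, because the product in $\Hol(G,{+})$ satisfies $(g,\alpha)(h,\beta) = (g+\alpha(h),\alpha\beta)$, and in general $\alpha(h)\ne h$; so the first isomorphism theorem cannot be invoked directly. Instead, I would reduce the counting to a cosets argument using the subgroup $K = H\cap \Aut(G,{+})$, which under our identification is exactly $\{(0,\alpha)\in H\}$.

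The key observation is that the right cosets of $K$ in $H$ coincide with the non-empty fibres of $\pi_G\colon H\to G$. First, for any $(g,\alpha)\in H$ and any $(0,\gamma)\in K$ one has
\[ (g,\alpha)(0,\gamma) = (g+\alpha(0),\alpha\gamma) = (g,\alpha\gamma), \]
so every element of the right coset $(g,\alpha)K$ has the same $G$-component $g$. Conversely, if two elements $(g,\alpha), (g,\beta)\in H$ have the same $G$-component, I would compute
\[ (g,\alpha)^{-1}(g,\beta) = (-\alpha^{-1}(g),\alpha^{-1})(g,\beta) = (-\alpha^{-1}(g)+\alpha^{-1}(g),\alpha^{-1}\beta) = (0,\alpha^{-1}\beta), \]
an element of $H$ (because $H$ is a subgroup) whose first coordinate is $0$, hence an element of $K$. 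This places $(g,\beta)$ in the right coset $(g,\alpha)K$.

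Putting these two statements together, the assignment $(g,\alpha)K\mapsto g$ is a well-defined bijection from the set of right cosets of $K$ in $H$ onto $\pi_G(H)$, so $[H:K] = \lvert\pi_G(H)\rvert$ and Lagrange's theorem (applicable since $G$ is finite) yields $\lvert H\rvert = \lvert\pi_G(H)\rvert\,\lvert H\cap\Aut(G,{+})\rvert$. The only mild obstacle is precisely the point emphasised above: one must resist the temptation to treat $\pi_G$ as a homomorphism and instead notice that the $G$-component is preserved by right (rather than left) multiplication by elements of $K$, which is what makes the fibre/coset correspondence work.
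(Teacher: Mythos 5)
Your proof is correct, and it is essentially the paper's argument in unwound form: the paper applies the orbit--stabiliser theorem to the action of $H$ on $G$ at the point $0$, for which the orbit map $(g,\alpha)\mapsto (g,\alpha)*0=g$ is exactly $\pi_G|_H$ and the stabiliser of $0$ is exactly $H\cap\Aut(G,{+})$. Your coset--fibre correspondence $(g,\alpha)K\mapsto g$ is precisely the bijection underlying the proof of that theorem, so the two arguments coincide in substance.
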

\begin{proof}
  The orbit of $0\in G$ with respect to the action of $\Hol(G)$ on $G$ is $\{(g,\alpha)*0\mid (g,\alpha)\in H\}=\pi_G(H)$ and the stabiliser of $0$ is $\{(g,\alpha)\in H\mid (g,\alpha)*0=0\}=\{(g,\alpha)\in H\mid g=0\}=\Aut(G)\cap H$. The result follows as an application of the orbit-stabiliser theorem.
\end{proof}

Proposition~\ref{prop-count} is useful in the finite case to discard subgroups whose subgroups cannot contain regular subgroups because the ``projection'' to $G$ is not surjective in the computation of all regular subgroups of the holomorph of a given finite group $(G, {+})$. We can do it by means of the following result.
\begin{proposition}\label{prop-surjective}
  Let $(G, {+})$ be a finite group and let $H$ be a subgroup of $\Hol(G, {+})$. The restriction of the ``projection'' $\pi_G$ of $\Hol(G, {+})$ to $H$ is surjective if, and only if, $\lvert H\rvert=\lvert G\rvert \lvert H\cap \Aut(G, {+})\lvert$.
\end{proposition}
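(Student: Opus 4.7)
The plan is to deduce this proposition as an almost immediate corollary of Proposition~\ref{prop-count}, which already establishes the counting identity $\lvert H\rvert=\lvert \pi_G(H)\rvert\,\lvert H\cap\Aut(G,{+})\rvert$ for every subgroup $H$ of $\Hol(G,{+})$. The idea is to reformulate surjectivity of $\pi_G\restriction H$ purely in terms of the cardinality of the image $\pi_G(H)$, which is a subset (though not necessarily a subgroup, since $\pi_G$ is not a homomorphism) of the finite set~$G$.

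First I would observe that, as $G$ is finite and $\pi_G(H)\subseteq G$, the restriction $\pi_G\restriction H$ is surjective if and only if $\pi_G(H)=G$, and this in turn holds if and only if $\lvert\pi_G(H)\rvert=\lvert G\rvert$. Assuming surjectivity, I substitute $\lvert\pi_G(H)\rvert=\lvert G\rvert$ into the identity of Proposition~\ref{prop-count} and immediately obtain $\lvert H\rvert=\lvert G\rvert\,\lvert H\cap\Aut(G,{+})\rvert$. Conversely, if $\lvert H\rvert=\lvert G\rvert\,\lvert H\cap\Aut(G,{+})\rvert$, then comparing with the same identity yields $\lvert\pi_G(H)\rvert=\lvert G\rvert$ (note that $\lvert H\cap\Aut(G,{+})\rvert\neq 0$, so cancellation is legitimate), and by finiteness of~$G$ this forces $\pi_G(H)=G$, i.e.\ surjectivity.

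I do not anticipate a genuine obstacle here: the only subtle point is remembering that $\pi_G$ is not a group homomorphism, so $\pi_G(H)$ is treated merely as a subset of the finite set~$G$, and that the argument uses finiteness in an essential way to pass from cardinality equality to set equality.
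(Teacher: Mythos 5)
Your proof is correct and follows exactly the route the paper intends: Proposition~\ref{prop-surjective} is stated in the paper without a separate proof precisely because it is the immediate consequence of Proposition~\ref{prop-count} that you spell out, namely that surjectivity of $\pi_G\restriction H$ onto the finite set $G$ is equivalent to $\lvert\pi_G(H)\rvert=\lvert G\rvert$, which one then substitutes into (or cancels from) the counting identity. Your remark that $\pi_G(H)$ must be treated as a mere subset of $G$ rather than a subgroup is a sensible precaution and does not affect the argument.
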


Another consequence of Proposition~\ref{prop-count} is the following characterisation of regular subgroups of $\Hol(G, {+})$ for  a finite group $(G, {+})$.
\begin{proposition}[cf.\ \cite{AcriBonatto20-algcoll-ab}]
  Let $(G, {+})$ be a finite group.
  Every two of the following three statements about a subgroup $H$ of $\Hol(G, {+})$ imply the other one.
  \begin{enumerate}
  \item $\lvert H\rvert = \lvert G\rvert$.
  \item $H\cap \Aut(G, {+})=\{(0,1)\}$.
  \item The restriction to $H$ of the ``projection'' $\pi_G$ of $\Hol(G, {+})$ on $G$ is surjective.
  \end{enumerate}
  Moreover, a subgroup $H$ of $\Hol(G,{+})$  satisfying two of the three previous properties (and so the other one) is regular.
\end{proposition}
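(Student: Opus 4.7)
The plan is to read everything off the single counting identity from Proposition~\ref{prop-count}, namely
\[ \lvert H\rvert = \lvert \pi_G(H)\rvert \cdot \lvert H\cap \Aut(G, {+})\rvert. \]
Since $\pi_G(H)\subseteq G$ and $G$ is finite, the surjectivity of $\pi_G|_H$ is equivalent to $\lvert\pi_G(H)\rvert=\lvert G\rvert$, so condition~(3) is really a statement about cardinalities. With this rephrasing, the equivalence ``any two imply the third'' becomes pure arithmetic in the identity above.

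I would then dispatch the three implications as one-line substitutions. Assuming (1) and (2), the identity yields $\lvert G\rvert=\lvert\pi_G(H)\rvert\cdot 1$, hence (3). Assuming (1) and (3), it yields $\lvert G\rvert=\lvert G\rvert\cdot\lvert H\cap\Aut(G,{+})\rvert$, so $H\cap\Aut(G,{+})$ has order~$1$; since $(0,1)$ always lies in this intersection, (2) follows. Assuming (2) and (3), it yields $\lvert H\rvert=\lvert G\rvert\cdot 1$, which is~(1).

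For the ``moreover'' clause, suppose all three conditions hold. As computed inside the proof of Proposition~\ref{prop-count}, the orbit of $0\in G$ under the restricted action of $H$ on $G$ is exactly $\pi_G(H)$, which equals $G$ by~(3); so the action is transitive. The point-stabiliser at~$0$ is $H\cap\Aut(G,{+})$, which is trivial by~(2). In a transitive action, stabilisers at different points are conjugate in~$H$, so triviality at $0$ forces triviality everywhere, and the action is regular.

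I do not anticipate any real obstacle: the statement is essentially a bookkeeping corollary of Proposition~\ref{prop-count}, whose content (the orbit-stabiliser theorem applied at $0\in G$) is doing all of the actual work. The only mild care is in translating between ``$\pi_G|_H$ is surjective'' and ``$\lvert\pi_G(H)\rvert=\lvert G\rvert$,'' which uses nothing beyond the finiteness of~$G$.
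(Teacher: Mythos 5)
Your proposal is correct and follows essentially the same route as the paper: both reduce the ``two imply the third'' part to the orbit--stabiliser identity of Proposition~\ref{prop-count} (the paper routes this through Proposition~\ref{prop-surjective}, which is just that identity specialised to surjectivity), and both establish regularity by noting that the orbit of $0$ is $\pi_G(H)=G$ and its stabiliser $H\cap\Aut(G,{+})$ is trivial. Your added remark that stabilisers at other points are conjugate, hence also trivial, merely spells out what the paper leaves implicit.
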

\begin{proof}


  The fact that every two of the three statements imply the other one is an immediate consequence of Proposition~\ref{prop-surjective}. As in the proof of Proposition~\ref{prop-count}, $\pi_G(H)$ is the orbit of $0$ under the action of $H$ and $H\cap \Aut(G)$ is the stabiliser of $0$. If $H$ satisfies all these properties, then the orbit of $0$ is $G$ and its stabiliser is trivial, that is, $H$ is regular.
\end{proof}

\begin{proposition}\label{prop-prod-sum}
  Let $H$ be a regular subgroup of $\Hol(G)$, say $H=\{(g, \lambda_g)\mid g\in G\}$. Then, given $g$, $k\in G$, $\lambda_{g+\lambda_g(k)}=\lambda_g\circ \lambda_k$ and $\lambda_g^{-1}=\lambda_{\lambda_g^{-1}(-g)}$
\end{proposition}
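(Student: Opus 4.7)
The plan is to exploit directly the fact that $H$ is a subgroup of $\Hol(G)$ together with the observation, established just above the statement, that the map $g\mapsto \lambda_g$ is well-defined: each element of $H$ is uniquely determined by its first coordinate, so any element of $H$ with first coordinate $h\in G$ must be $(h,\lambda_h)$. With this in hand, both identities become bookkeeping in $\Hol(G)$.

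For the first identity, I would compute the product $(g,\lambda_g)(k,\lambda_k)$ in $\Hol(G)$ using the multiplication rule recalled at the start of Section~2, obtaining
\[
(g,\lambda_g)(k,\lambda_k)=(g+\lambda_g(k),\,\lambda_g\circ\lambda_k).
\]
Since $H$ is closed under multiplication, this product lies in $H$. The uniqueness of the second coordinate given the first therefore forces
\[
\lambda_{g+\lambda_g(k)}=\lambda_g\circ\lambda_k.
\]

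For the second identity, I would use the inverse formula $(g,\alpha)^{-1}=(-\alpha^{-1}(g),\alpha^{-1})$ recalled in Section~2, applied to $(g,\lambda_g)\in H$, to get
\[
(g,\lambda_g)^{-1}=(-\lambda_g^{-1}(g),\,\lambda_g^{-1}).
\]
Because $\lambda_g^{-1}$ is a group automorphism of $(G,{+})$, it sends $-g$ to $-\lambda_g^{-1}(g)$, so the first coordinate equals $\lambda_g^{-1}(-g)$. Since $(g,\lambda_g)^{-1}\in H$, again by the uniqueness of the second coordinate given the first we conclude
\[
\lambda_g^{-1}=\lambda_{\lambda_g^{-1}(-g)}.
\]

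There is essentially no obstacle: the only subtlety is to remember that well-definedness of $g\mapsto\lambda_g$ was proved in the paragraph preceding the proposition (via the regularity of the action), and to use that $\lambda_g^{-1}$ is an automorphism of $(G,{+})$ in order to move the minus sign inside. Everything else is a one-line application of the formulas for product and inverse in $\Hol(G,{+})$.
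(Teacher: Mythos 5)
Your proof is correct. For the first identity you argue exactly as the paper does: compute $(g,\lambda_g)(k,\lambda_k)=(g+\lambda_g(k),\lambda_g\circ\lambda_k)$, use closure of $H$, and invoke the fact (established just before the proposition) that the second coordinate of an element of $H$ is determined by the first. For the second identity your route differs slightly from the paper's: you apply the explicit inverse formula $(g,\alpha)^{-1}=(-\alpha^{-1}(g),\alpha^{-1})$ in $\Hol(G,{+})$, rewrite $-\lambda_g^{-1}(g)$ as $\lambda_g^{-1}(-g)$ using that $\lambda_g^{-1}$ is an automorphism, and again appeal to uniqueness of the second coordinate; the paper instead specialises the first identity at $k=\lambda_g^{-1}(-g)$, so that the first coordinate of the product becomes $0$ and $\lambda_0=1$ yields $\lambda_g\circ\lambda_{\lambda_g^{-1}(-g)}=1$. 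Both arguments are one-line consequences of $H$ being a subgroup together with well-definedness of $g\mapsto\lambda_g$; yours uses closure under inversion directly, the paper's uses only closure under multiplication plus the normalisation $\lambda_0=1$. There is no gap in either.
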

\begin{proof}
  Note that $(g, \lambda_g)(k, \lambda_k)=(g+\lambda_g(k),\lambda_g\circ\lambda_k)=(g+\lambda_g(k), \lambda_{g+\lambda_g(k)})\in H$. When we apply this to $k=\lambda_g^{-1}(-g)$, we obtain that $1=\lambda_0=\lambda_g\circ\lambda_{\lambda_g^{-1}(-g)}$. The result follows.
\end{proof}

The following fact is mentioned in \cite{BardakovNeshchadimYadav20} and used to improve the algorithms to obtain all skew left braces with a given additive group. 

\begin{proposition}\label{prop-conj-hol-aut}
  Let $H$ be a regular subgroup of $\Hol(G, {+})$ with $(G, {+})$ a group and let $(g, \alpha)\in \Hol(G, {+})$. Then $(g, \alpha)H(g, \alpha)^{-1}$ is again a regular subgroup of $\Hol(G, {+})$. Furthermore, there exists $\beta\in\Aut(G, {+})$ such that $(g, \alpha)H(g,\alpha)^{-1}=(0, \beta)H(0,\beta)^{-1}$.
\end{proposition}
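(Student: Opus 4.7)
The plan is to prove the two assertions separately, with the \emph{furthermore} clause being the substantive one. For the regularity of $(g,\alpha)H(g,\alpha)^{-1}$, I would invoke the general principle that conjugation in a permutation group preserves regular actions: since $\Hol(G,+)\le\Sigma_G$ acts faithfully on $G$, conjugation by any $\sigma\in\Hol(G,+)$ yields an action of $\sigma H\sigma^{-1}$ on $G$ equivariantly isomorphic to the action of $H$ via the bijection $\sigma\colon G\to G$, so orbits and point-stabilisers transport under $\sigma$ and regularity passes to the conjugate.

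The strategy for the second claim is to factor the conjugating element as $(g,\alpha)=(0,\beta)\cdot h$ with $\beta\in\Aut(G,+)$ and $h\in H$; if such a factorisation exists, then
\[(g,\alpha)H(g,\alpha)^{-1}=(0,\beta)\,h H h^{-1}\,(0,\beta)^{-1}=(0,\beta)H(0,\beta)^{-1},\]
since $hHh^{-1}=H$ for any $h\in H$. Writing $H=\{(x,\lambda_x)\mid x\in G\}$ via the regular structure, unpacking $(0,\beta)(x,\lambda_x)=(g,\alpha)$ forces $\beta=\alpha\circ\lambda_x^{-1}$ and reduces the existence of the factorisation to finding $x\in G$ with $\lambda_x^{-1}(x)=\alpha^{-1}(g)$.

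The main obstacle is thus the surjectivity of the self-map $\Phi\colon G\to G$, $x\mapsto\lambda_x^{-1}(x)$. I would verify it by reading $\Phi$ off the group inversion of $H$: the composite
\[x\;\longmapsto\;(x,\lambda_x)\;\longmapsto\;(x,\lambda_x)^{-1}=(-\lambda_x^{-1}(x),\lambda_x^{-1})\;\longmapsto\;-\lambda_x^{-1}(x)\]
is a bijection $G\to G$, since each of its three arrows is one (the identification $x\mapsto(x,\lambda_x)$ of $G$ with $H$, group inversion on $H$, and first-coordinate projection, bijective on $H$ by the same identification); negation on $G$ then turns this composite into $\Phi$, proving $\Phi$ is bijective. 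In the finite case this is equivalent to the counting identity $\lvert\Aut(G,+)\cdot H\rvert=\lvert\Aut(G,+)\rvert\lvert H\rvert/\lvert\Aut(G,+)\cap H\rvert=\lvert\Hol(G,+)\rvert$, so $\Hol(G,+)=\Aut(G,+)\cdot H$ as sets, making the factorisation automatic. Once $x$ is found, setting $\beta=\alpha\circ\lambda_x^{-1}$ completes the argument.
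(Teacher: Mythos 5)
Your argument is correct, and it is fully self-contained, which is more than the paper offers: the paper states this proposition without proof, merely noting that the fact is mentioned in the reference of Bardakov, Neshchadim and Yadav. Both halves of your proof check out. The first part is the standard transport of a regular action along the bijection $\sigma=(g,\alpha)$ of $G$. For the second part, the factorisation $(g,\alpha)=(0,\beta)(x,\lambda_x)$ unpacks exactly as you say to $\beta=\alpha\circ\lambda_x^{-1}$ and $\lambda_x^{-1}(x)=\alpha^{-1}(g)$, and your verification that $x\mapsto\lambda_x^{-1}(x)$ is bijective — as the composite of the identification $x\mapsto(x,\lambda_x)$, inversion in $H$, the first-coordinate projection (bijective on $H$ precisely because $H$ is regular), and negation — is clean and uses only the formula $(x,\lambda_x)^{-1}=(-\lambda_x^{-1}(x),\lambda_x^{-1})$ already recorded in the paper (consistent with its Proposition on products and inverses of the $\lambda$ maps). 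In effect you have proved the set equality $\Hol(G,{+})=\Aut(G,{+})\cdot H$ for any regular $H$, valid for arbitrary, not necessarily finite, groups; your counting remark correctly recovers the finite case from $H\cap\Aut(G,{+})=\{(0,1)\}$. No gaps.
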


\section{Regular subgroups and skew left braces}\label{sec-regular}

We present now the result that allows us to construct all skew left braces with a given additive group $(G, {+})$. All these results are well known (see, for instance, \cite[Section~4]{GuarnieriVendramin17}) and we present them here for completeness.

\begin{proposition}
  Let $(B, {+}, {\cdot})$ be a skew left brace. Given $a\in B$, let $\lambda_a\colon B\longrightarrow B$ be the lambda map given by $\lambda_a(b)=-a+ab$. Then $H=\{(a, \lambda_a)\mid a\in B\}$ is a regular subgroup of $\Hol(B, {+})$.
\end{proposition}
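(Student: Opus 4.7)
The plan is to verify three things in turn: that each $\lambda_a$ genuinely belongs to $\Aut(B,{+})$, that $H$ is a subgroup of $\Hol(B,{+})$, and finally that $H$ acts regularly on $B$. For the first step, additivity of $\lambda_a$ should follow directly from the distributive-like axiom: computing
\[
\lambda_a(b+c) \;=\; -a + a(b+c) \;=\; -a + ab - a + ac \;=\; \lambda_a(b) + \lambda_a(c).
\]
Bijectivity will fall out of the next step, since I expect to show that $a \mapsto \lambda_a$ is a homomorphism from $(B,{\cdot})$ into the endomorphism monoid of $(B,{+})$, which forces $\lambda_a^{-1} = \lambda_{a^{-1}}$ where $a^{-1}$ is the multiplicative inverse of $a$.

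For the homomorphism property $\lambda_{ab} = \lambda_a \circ \lambda_b$, I would unfold $\lambda_a(\lambda_b(c)) = \lambda_a(-b + bc)$ using additivity already established, giving $-\lambda_a(b) + \lambda_a(bc) = -(-a+ab) + (-a + a(bc)) = -ab + a(bc) = -(ab) + (ab)c = \lambda_{ab}(c)$, where associativity of~$\cdot$ is used in the last step. This identity is the key technical point, and it also instantly yields the subgroup claim: the product in $\Hol(B,{+})$ gives
\[
(a,\lambda_a)(b,\lambda_b) \;=\; (a+\lambda_a(b),\, \lambda_a\circ\lambda_b) \;=\; (ab,\, \lambda_{ab}) \;\in\; H,
\]
while $\lambda_0 = \mathrm{id}$ (since $0$ is the common identity of the two operations) shows $(0,1)\in H$, and the computation above shows $(a,\lambda_a)^{-1} = (a^{-1},\lambda_{a^{-1}})\in H$. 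As a bonus, the same computation proves that $a\mapsto (a,\lambda_a)$ is a group isomorphism from $(B,{\cdot})$ onto $H$, so $|H| = |B|$.

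For regularity, the cleanest route is the direct one, which also bypasses any finiteness hypothesis: given $h,k\in B$, the equation $(a,\lambda_a)\ast k = h$ reads $a + \lambda_a(k) = a + (-a + ak) = ak = h$, so $a = h\cdot k^{-1}$ is the unique solution. Alternatively, having already established $|H| = |B|$ and the evident fact that the projection $\pi_B$ restricted to $H$ is surjective (indeed $\pi_B(a,\lambda_a) = a$), regularity is immediate from the three-conditions characterisation proved earlier in the section. I do not anticipate a serious obstacle; the only delicate point is the bookkeeping with $+$ being potentially non-abelian when proving $\lambda_a$ is additive and that $\lambda$ is multiplicative, but the brace axiom is tailored exactly to make these computations go through.
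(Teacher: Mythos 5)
Your proof is correct and follows essentially the same route as the paper: the paper likewise verifies closure and inverses in $\Hol(B,{+})$ via the identity $\lambda_{a+\lambda_a(b)}=\lambda_a\circ\lambda_b$ (your $a+\lambda_a(b)=ab$, $\lambda_{ab}=\lambda_a\lambda_b$ is the same identity written multiplicatively) and then checks regularity by unique solvability of $(g,\alpha)*k=h$. The only difference is cosmetic: you prove the ``well known'' facts that $\lambda_a\in\Aut(B,{+})$ and that $a\mapsto\lambda_a$ is multiplicative from the brace axiom, where the paper cites them.
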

\begin{proof}
  It is well known that $\lambda_a\in \Aut(B, {+})$ for all $a\in A$. Since
  \[(a, \lambda_a)(b, \lambda_b)=(a+\lambda_a(b), \lambda_a\lambda_b)=(a+\lambda_a(b), \lambda_{a+\lambda_a(b)})\in H\]
  by Proposition~\ref{prop-prod-sum} and, by the same result, $\lambda_a^{-1}=\lambda_{\lambda_a^{-1}(-a)}$ and so $(a, \lambda_a)^{-1}=(\lambda_a^{-1}(-a), \lambda_a^{-1})=(\lambda_a^{-1}(-a),\lambda_{\lambda_a^{-1}(-a)})\in H$, $H$ is a subgroup of $\Hol(B, {+})$. Given $a\in G$, there exists a unique element $(g,\alpha)$ in $H$ such that $(g,\alpha)*0=a$, namely $(g, \alpha)=(a, \lambda_a)$. Therefore, $H$ is regular.
\end{proof}

The proof of the  following proposition can be found in \cite[Theorem~4.2]{GuarnieriVendramin17}.
\begin{proposition}
  Given a regular subgroup $H$ of $\Hol(G, {+})$, with $(G, {+})$ a group, then $H$ admits a structure of skew left brace whose additive group is isomorphic to $(G, {+})$.
\end{proposition}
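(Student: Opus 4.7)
The plan is to keep the multiplicative structure of $H$ as its group operation inherited from $\Hol(G, +)$, and to transport an additive structure from $(G, +)$ along the bijection $\pi_G|_H \colon H \to G$. From the discussion preceding the statement, regularity of $H$ gives the description $H = \{(g, \lambda_g) \mid g \in G\}$ for a uniquely determined map $g \mapsto \lambda_g$ into $\Aut(G, +)$, and $\pi_G|_H$ is a bijection. I would define addition on $H$ by $(g, \lambda_g) + (h, \lambda_h) := (g+h, \lambda_{g+h})$, so that $(H, +)$ becomes a group and $\pi_G|_H \colon (H, +) \to (G, +)$ is a group isomorphism by construction; this automatically settles the claim about the additive group, provided the brace axioms hold.

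The remaining step is to verify the skew left brace distributive law $a(b+c) = ab - a + ac$ for arbitrary $a = (g, \lambda_g)$, $b = (h, \lambda_h)$, $c = (k, \lambda_k) \in H$. Because $\pi_G|_H$ is a bijection, it suffices to compare first coordinates in $G$. Using the product formula in $\Hol(G, +)$ together with the fact that $\lambda_g \in \Aut(G, +)$ preserves addition, the first coordinate of $a(b+c)$ works out to $g + \lambda_g(h) + \lambda_g(k)$. On the other side, the first coordinate of $ab - a + ac$ is $(g + \lambda_g(h)) + (-g) + (g + \lambda_g(k))$, which collapses to the same expression $g + \lambda_g(h) + \lambda_g(k)$ after cancelling $-g$ with the subsequent $+g$ in $(G, +)$.

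I do not expect any real obstacle here: the argument is essentially bookkeeping once the description $H = \{(g, \lambda_g) \mid g \in G\}$ from the earlier lemmas is in hand, and once one notices that the group operation of $H$ is characterised by Proposition~\ref{prop-prod-sum}. The only genuine subtlety is that $(G, +)$ need not be abelian, which is precisely why the brace identity is written with $-a$ sandwiched between $ab$ and $ac$ and why the cancellation must be performed in the prescribed order rather than by rearranging terms freely.
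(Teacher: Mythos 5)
Your proof is correct, and it is essentially the standard argument: the paper itself gives no proof here but defers to \cite[Theorem~4.2]{GuarnieriVendramin17}, where the same transport-of-structure idea along the bijection $(g,\lambda_g)\mapsto g$ is used (there the multiplication is carried over to $G$ rather than the addition to $H$, which is only a cosmetic difference). Your verification of the identity $a(b+c)=ab-a+ac$ on first coordinates, including the care taken with the order of terms when $(G,{+})$ is nonabelian, is exactly what is needed.
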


The following result combines Lemma~2.1 and Theorem~2.2 of \cite{BardakovNeshchadimYadav20}.
\begin{proposition}\label{prop-reg-conj-brace-iso}
  Two regular subgroups $H_1$ and $H_2$ of $\Hol(G, {+})$ induce isomorphic skew left braces if, and only if, they are conjugate by an element of $\Aut(G, {+})$.
\end{proposition}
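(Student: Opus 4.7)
The plan is to unpack the equivalence into the language of $\lambda$-maps, which is where the brace structure lives, and reduce everything to the computation of a conjugation inside $\Hol(G,{+})$.

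For both directions, I would write $H_i = \{(g, \lambda_g^{(i)}) \mid g \in G\}$, so that the induced brace $(B_i, {+}, {\cdot}_i)$ has underlying set $G$, additive operation $+$, and multiplication $g \cdot_i h = g + \lambda_g^{(i)}(h)$, as read off from the product in $\Hol(G,{+})$. A brace isomorphism $f \colon B_1 \longrightarrow B_2$ is then precisely an element of $\Aut(G,{+})$ satisfying $f(g \cdot_1 h) = f(g) \cdot_2 f(h)$ for all $g$, $h \in G$.

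For the direction ($\Rightarrow$), I would start from a brace isomorphism $f$ and expand both sides of $f(g \cdot_1 h) = f(g) \cdot_2 f(h)$ using the multiplication formula above; additivity of $f$ cancels the leading terms and yields the key identity $\lambda_{f(g)}^{(2)} = f \circ \lambda_g^{(1)} \circ f^{-1}$ for all $g \in G$. I would then compute the conjugate $(0, f)(g, \lambda_g^{(1)})(0, f)^{-1}$ directly inside $\Hol(G,{+})$; using the inverse formula $(0, f)^{-1} = (0, f^{-1})$ recorded in Section~2, this equals $(f(g), f \circ \lambda_g^{(1)} \circ f^{-1}) = (f(g), \lambda_{f(g)}^{(2)})$, and as $g$ ranges over $G$ so does $f(g)$, so $(0, f) H_1 (0, f)^{-1} = H_2$.

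For the direction ($\Leftarrow$), Proposition~\ref{prop-conj-hol-aut} lets me assume the conjugating element has the form $(0, \beta)$ with $\beta \in \Aut(G,{+})$. A direct computation, essentially the reverse of the one above, shows that $(0, \beta) H_1 (0, \beta)^{-1} = \{(\beta(g),\, \beta \circ \lambda_g^{(1)} \circ \beta^{-1}) \mid g \in G\}$; equating this with $H_2 = \{(h, \lambda_h^{(2)}) \mid h \in G\}$ forces $\lambda_{\beta(g)}^{(2)} = \beta \circ \lambda_g^{(1)} \circ \beta^{-1}$. Using this identity, I would verify $\beta(g \cdot_1 h) = \beta(g) \cdot_2 \beta(h)$ directly from the multiplication formula, concluding that $\beta$ is the required brace isomorphism.

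I do not anticipate a genuine obstacle: the result is essentially a bookkeeping exercise translating between conjugation in $\Hol(G,{+})$ and the equivariance of $\lambda$-maps. The only subtle point is the reduction from a general conjugating element $(g, \alpha)$ to an element of the form $(0, \beta)$, which is exactly what Proposition~\ref{prop-conj-hol-aut} supplies, and the minor care needed with the order of composition when inverting elements of $\Hol(G,{+})$.
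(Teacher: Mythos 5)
Your argument is correct. Note, however, that the paper does not actually prove this proposition: it simply cites it as a combination of Lemma~2.1 and Theorem~2.2 of Bardakov--Neshchadim--Yadav, so your direct verification supplies what the paper delegates to a reference, and it is essentially the standard argument given there. The two computations at the heart of your proof are sound: writing $H_i=\{(g,\lambda^{(i)}_g)\mid g\in G\}$ and $g\cdot_i h=g+\lambda^{(i)}_g(h)$, a brace isomorphism $f$ (necessarily an element of $\Aut(G,{+})$, since both braces share the additive group $(G,{+})$) satisfies $\lambda^{(2)}_{f(g)}=f\circ\lambda^{(1)}_g\circ f^{-1}$, which is exactly the statement $(0,f)H_1(0,f)^{-1}=H_2$; and conversely the equality $(0,\beta)H_1(0,\beta)^{-1}=H_2$ forces $\lambda^{(2)}_{\beta(g)}=\beta\circ\lambda^{(1)}_g\circ\beta^{-1}$ because in a regular subgroup the automorphism component is uniquely determined by the $G$-component, whence $\beta$ respects both multiplications. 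One small remark: in the ($\Leftarrow$) direction your appeal to Proposition~\ref{prop-conj-hol-aut} is redundant, since the hypothesis already states that the conjugating element lies in $\Aut(G,{+})$, i.e.\ is of the form $(0,\beta)$; that proposition is only needed if you want the stronger version (conjugacy anywhere in $\Hol(G,{+})$ suffices), which is precisely the observation the paper makes in the remark following the proposition.
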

Note that, by Proposition~\ref{prop-conj-hol-aut}, the condition of Proposition~\ref{prop-reg-conj-brace-iso} can be replaced by conjugation in $\Hol(G, {+})$.

\section{Computational challenges}

By Section~\ref{sec-regular}, the problem of determining the skew left braces with additive group isomorphic to $G=C_4\times C_4\times C_4$ up to isomorphism is reduced to determining the conjugacy classes of regular subgroups of $\Hol(G)$. Furthermore, since $G$ is a $2$-group, every regular subgroup $H$ of $\Hol(G)$ has order $\lvert H\rvert=\lvert G\rvert=64$ and so a conjugate of $H$ is contained in a fixed Sylow $2$-subgroup of~$\Hol(G)$. Hence it is enough to determine all regular subgroups of a Sylow $2$-subgroup of $\Hol(G)$.

Hulpke  \cite{Hulpke99} has developed an algorithm to determine the lattice of subgroups of a finite soluble group~$S$. This algorithm is implemented by means of the function \texttt{SubgroupsSolvableGroup} of \textsf{GAP} \cite{GAP4-11-1}. We summarise this algorithm as follows:

\begin{enumerate}
\item We compute a normal series $S\trianglerighteq  N_1\trianglerighteq \dots \trianglerighteq N_r=1$ with elementary abelian factors.
  
\item We construct by induction the subgroups of $S/N_{i+1}$ from the subgroups of $S/N_i$. Without loss of generality, we assume that $N_{i+1}=1$, $N=N_i$, and we know the subgroups of $S/N$. We have the following possibilities for a subgroup $U$ of $S$:
  \begin{enumerate}
  \item $U$ contains $N$ and thus $U$ is the full preimage of a subgroup of $S/N$ under the natural epimorphism;
  \item $U$ is contained in $N$ and so $U$ is a subspace of the vector space $N$, or
  \item $B:=U\cap N$ is a proper subgroup of~$N$ and $A=NU$ is a subgroup of $G$ that contains properly~$N$.
  \end{enumerate}
  The subgroups of the first type are simply the preimages of the subgroups of $S/N$, that have been computed by induction. The subgroups of the second type are the subspaces of the vector space~$N$. Hence it is enough to consider the third case. In the third case, $B\trianglelefteq U$ and  $B\trianglelefteq N$. Therefore $B\trianglelefteq NU=A$ and $A\le \Norm_S(A)\cap \Norm_S(B)$. It follows that $U/B$ can be computed as a complement of $N/B$ in~$A/B$. 
\end{enumerate}

We cannot apply this algorithm directly to $S=\Hol(G)$ since $\Hol(G)$ is not soluble, but we can apply it to a Sylow $2$-subgroup of $\Hol(G)$. The implementation of \texttt{SubgroupsSolvableGroup} in \textsf{GAP} includes the possibility of adding restrictions like \texttt{ExactSizeConsiderFunction} to avoid the computation of subgroups that do not lead to subgroups of the specified order. This is useful since regular subgroups of $\Hol(G)$ have the same order as $G$. We also note that regular subgroups of $\Hol(G)$ must have a surjective ``projection'' onto $G$ and so we can add the restriction of Proposition~\ref{prop-surjective} to discard all subgroups leading only to non-regular subgroups.

In the \textsf{GAP} implementation of \texttt{SubgroupsSolvableGroup}, the list of all conjugacy classes of subgroups of $G/N_i$ (layer~$i$) and all computed conjugacy classes of $G/N_{i+1}$ (layer $i+1$) are stored at each layer. However, only one group of the layer~$i$ is needed at each step and the groups obtained for the layer $i+1$ will not be used until advancing to the next layer. Furthermore, they can be a lot of subgroups and they can use a large amount of memory, that could eventually exhaust the physical memory. Our approach is to replace saving these subgroups to the memory by saving them to a hard disk at the obvious drawback of speed. Furthermore, in the event of a power loss, we could restart the computation at the exact point it was stopped. We have also modified the algorithm to use space on a hard disk instead of the RAM.

We note that the \textsf{GAP} function \texttt{SubgroupsSolvableGroup}, when it is applied with some restrictions like \texttt{ExactSizeConsiderFunction}, returns all subgroups satisfying these restrictions, but it might return other subgroups. Our implementation includes a final check to remove these eventually extra subgroups.

\section{Our computations}

The implementation of this modified algorithm for the computation of the regular subgroups of a Sylow $2$-subgroup of $\Hol(C_4\times C_4\times C_4)$ produced $31\,367\,678$ conjugacy classes. Of course, some of these classes can have representatives that are not conjugate in the Sylow $2$-subgroup, but can be conjugate in $\Hol(C_4\times C_4\times C_4)$. Consequently, the next natural step is to classify their representatives by conjugation in $\Hol(C_4\times C_4\times C_4)$. Since we were expecting many regular subgroups to be compared by conjugation in $\Hol(C_4\times C_4\times C_4)$, in the last step of the algorithm we classify the regular groups by their isomorphism class, the isomorphism class of the kernel of the action of the brace on the additive group (the set of all elements of the regular subgroup that stabilise all elements of $C_4\times C_4\times C_4$, that coincides with the centraliser of the normal subgroup $C_4\times C_4\times C_4$ in the holomorph as a semidirect product with respect to this action) and the isomorphism class of the quotient by this normal subgroup. We have obtained an overall number of $1\,442$ equivalence classes.

Our idea is to reduce the checking of conjugation to each of these $1\,442$ equivalence classes. The comparisons of different equivalence classes can be performed in parallel by using different processors. Some of these equivalence classes turn out to be small, for example, $74$ of them have only one element and $1\,055$ have at most $100$ elements. In all these classes the comparison by conjugation is fast. However, $63$ equivalence classes have more than $100\,000$ subgroups, $19$ equivalence classes have more than $500\,000$ subgroups, $13$ equivalence classes have more than $800\,000$ subgroups, and the $4$ largest equivalence classes have more than $1\,000\,000$ subgroups. The largest one has $1\,782\,312$ subgroups.

We have decided to refine the $63$ largest equivalence classes by means of the length of the conjugacy class in $\Hol(C_4\times C_4\times C_4)$. This refinement applied to all equivalence classes gives a total number of $2\,353$ equivalence classes. In some cases, this has allowed us to obtain some small numbers of subgroups that can be easily compared by conjugation, but for the largest conjugacy class lengths the computations were still slow. The execution of these comparisons on a computer with an Intel processor i7-11700 that allows the execution of $16$ parallel tasks and $32$~Gb of RAM running GNU/Linux during a couple of months made us guess that we would need more than two years to perform the task.

At that time we applied for the use of the scientific supercomputer \emph{Llu\'\i s Vives} to the Computer Service of the Universitat de València (see \cite{LluisVivesv2}). We thank the Computer Service for granting an immediate access to this machine and for their help installing \textsf{GAP} and solving our doubts. On this machine, we were able to complete the computations in less than two months by running several comparisons in parallel. Our implementation of parallelism in this setting has consisted of running several instances of \textsf{GAP} that select from a list of regular subgroups a unique representative of each conjugacy class in $\Hol(C_4\times C_4\times C_4)$ or select from two lists of regular subgroups for which two elements in the same list are not conjugate in $\Hol(C_4\times C_4\times C_4)$ a list of the subgroups of both lists that contain no pairs of conjugate subgroups. We have not used any particular computer package to run \textsf{GAP} in parallel mode, as our setting has been enough for our purposes. The total number of conjugacy classes of regular subgroups of $\Hol(C_4\times C_4\times C_4)$ we have found is $\numberbraces$. Table~\ref{tab-mult} summarises the numbers of conjugacy classes by the first invariant we consider, the isomorphism class of the multiplicative group of the resulting left brace.

\section*{Acknowledgements}

These results are part of the R+D+i project supported by the Grant 
PGC2018-095140-B-I00, funded by MCIN/AEI/10.13039/501100011033 and by ``ERDF A way of making Europe.'' We thank the anonymous referees for their comments that have allowed us to improve the proofs of some results and the  presentation of the paper. We also thank the Computer Service of the Universitat de Val\`encia for allowing us to use the scientific supercomputer \emph{Llu\'\i s Vives} and for their kind support in setting up the system for the computation and solving our questions.

\section*{Data availability}
The complete list of left braces or order~$64$ is available for use in \textsf{GAP} \cite{GAP4-11-1} on \url{https://github.com/RamonEstebanRomero/braces64} \cite{BallesterEstebanPerezC22-braces64-GitHub}. It also includes the left braces with additive group $C_2\times C_2\times C_4\times C_4$ computed by the authors in~\cite{BallesterEstebanPerezC23-braces64-192}. The storage of these left braces follows the ideas of \cite{BallesterEsteban22} of representing them as triply factorised groups. This repository also includes some \textsf{GAP} functions to use these left braces with the help of the \textsf{YangBaxter} package \cite{VendraminKonovalov22-YangBaxter-0.10.1} for \textsf{GAP}. 



\begin{thebibliography}{10}

\bibitem{AcriBonatto20-algcoll-ab}
E.~Acri and M.~Bonatto, \emph{Skew braces of order {$p^2q$} {I}: Abelian type},
  Alg. Colloq. \textbf{29} (2020), no.~2, 297--320.

\bibitem{BallesterEsteban22}
A.~Ballester-Bolinches and R.~Esteban-Romero, \emph{Triply factorised groups
  and the structure of skew left braces}, Commun. Math. Stat. \textbf{10}
  (2022), 353--370.

\bibitem{BallesterEstebanPerezC22-braces64-GitHub}
A.~Ballester-Bolinches, R.~Esteban-Romero, and V.~P{\'e}rez-Calabuig,
  \emph{Left braces of order {$64$}}, WWW
  \url{https://doi.org/10.5281/zenodo.7406729}, December 2022,
  DOI:10.5281/zenodo.7406729.

\bibitem{BallesterEstebanPerezC23-braces64-192}
\bysame, \emph{Enumeration of left braces with additive group {$C_2\times
  C_2\times C_4\times C_4$}}, Ric. Mat. (2023),
  {\url{https://doi.org/10.1007/s11587-023-00772-2}}.

\bibitem{BardakovNeshchadimYadav20}
V.~G. Bardakov, M.~V. Neshchadim, and M.~K. Yadav, \emph{Computing skew left
  braces of small orders}, Int. J. Algebra Comput. \textbf{30} (2020), no.~4,
  839--851.

\bibitem{BosmaCannonFiekerSteel16-Magma}
W.~Bosma, J.~J. Cannon, C.~Fieker, and A.~Steel, \emph{Handbook of
  {\textsc{magma}} functions}, {2.19} ed., 2016.

\bibitem{LluisVivesv2}
{Computer Service, Universitat de Val{\`e}ncia}, \emph{Lluisvives v2 user's
  guide}, Universitat de Val{\`e}ncia, 2020,
  \url{https://www.uv.es/calcul/Manuales/LluisVives2_user_manual.pdf} (visited
  8th April, 2022).

\bibitem{GAP4-11-1}
The GAP~Group, \emph{{GAP} -- {Groups}, {Algorithms}, and {Programming},
  {Version} {4.11.1}}, 2021.

\bibitem{GuarnieriVendramin17}
L.~Guarnieri and L.~Vendramin, \emph{Skew-braces and the {Y}ang-{B}axter
  equation}, Math.\ Comp. \textbf{86} (2017), no.~307, 2519--2534. \MR{3647970}

\bibitem{Hulpke99}
A.~Hulpke, \emph{Computing subgroups invariant under a set of automorphsms}, J.
  Symbolic Comput. \textbf{27} (1999), no.~4, 415--427.

\bibitem{Kerber99}
A.~Kerber, \emph{Applied finite group actions}, 2nd ed., Algorithms and
  Combinatorics, vol.~19, Springer-Verlag, Berlin, Heidelberg, New York, 1999.

\bibitem{Rump07}
W.~Rump, \emph{Braces, radical rings, and the quantum {Y}ang-{B}axter
  equation}, J. Algebra \textbf{307} (2007), 153--170. \MR{2278047}

\bibitem{Vendramin19-agta}
L.~Vendramin, \emph{Problems on skew left braces}, Adv. Group Theory Appl.
  \textbf{7} (2019), 15--37.

\bibitem{VendraminKonovalov22-YangBaxter-0.10.1}
L.~Vendramin and O.~Konovalov, \emph{Yang{B}axter: combinatorial solutions for
  the {Y}ang-{B}axter equation}, August 2022, version 0.10.1,
  \url{https://gap-packages.github.io/YangBaxter/}.

\end{thebibliography}

\providecommand{\bysame}{\leavevmode\hbox to3em{\hrulefill}\thinspace}
\providecommand{\MR}{\relax\ifhmode\unskip\space\fi MR }
\providecommand{\MRhref}[2]{%
  \href{http://www.ams.org/mathscinet-getitem?mr=#1}{#2}
}
\providecommand{\href}[2]{#2}

\end{document}